\title{Well-posedness of an asymptotic model for capillarity-driven free boundary Darcy flow in porous media in the critical Sobolev space }
\author{
	Stefano Scrobogna
	\\{\footnotesize Departamento de An\'alisis Matemático \& IMUS,  Universidad de Sevilla,  Sevilla, Espa\~na}
	\\{\footnotesize email: {\it scrobogna@us.es}}
}
\DeclareMathAlphabet{\mathcal}{OMS}{cmsy}{m}{n}
\tikzset{cross/.style={cross out, draw=black, minimum size=2*(#1-\pgflinewidth), inner sep=0pt, outer sep=0pt},
cross/.default={1pt}}
\newcommand{\dx}{\textnormal{d}{x}}
\newcommand{\dd}{\textnormal{d}}
\newcommand{\pare}[1]{\left( #1 \right)}
\newcommand{\angles}[1]{\left\langle #1 \right\rangle}
\newcommand{\norm}[1]{\left\| #1 \right\|}
\newcommand{\av}[1]{\left| #1 \right|}
\newcommand{\bra}[1]{\left[ #1 \right]}
\newcommand{\set}[1]{\left\{ #1 \right\}}
\newcommand{\ddt}{\frac{\textnormal{d}}{\textnormal{d}t}}
\newcommand{\pv}{\textnormal{p.v.}}
\def\comm#1#2{{\left\llbracket#1,#2\right\rrbracket}}
\newcommand{\RN}[1]{%
  \textup{\uppercase\expandafter{\romannumeral#1}}%
}
\newcommand{\nnorm}[1]{{\left\vert\kern-0.25ex\left\vert\kern-0.25ex\left\vert #1 
    \right\vert\kern-0.25ex\right\vert\kern-0.25ex\right\vert}}
\newcommand{\cC}{\mathcal{C}}
\newcommand{\cB}{\mathcal{B}}
\newcommand{\cS}{\mathcal{S}}
\newcommand{\bR}{\mathbb{R}}
\newcommand{\bN}{\mathbb{N}}
\newcommand{\bS}{\mathbb{S}}
\newcommand{\cH}{\mathcal{H}}
\newcommand{\bZ}{\mathbb{Z}}
\newcommand{\ii}{\mathbbm{i}}
\newcommand{\ee}{\mathsf{e}}
\newcommand{\hra}{\hookrightarrow}
\newcommand{\sgn}{\textnormal{sgn}}
\theoremstyle{theorem}
\newtheorem{theorem}{Theorem}[section]
\newtheorem*{theorem*}{Theorem}
\newtheorem{lemma}[theorem]{Lemma}
\theoremstyle{definition}
\numberwithin{equation}{section}
\begin{document}

\maketitle

\begin{abstract}
We prove that the quadratic approximation of the capillarity-driven free-boundary Darcy flow, derived in \cite{GS19}, is well posed in $ \dot{H}^{3/2} \pare{\bS^1}$, and globally well-posed if the initial datum is small in $ \dot{H}^{3/2} \pare{\bS^1} $.
\end{abstract}

\section{Presentation of the problem}

Fluid moving in porous media, such as sand or wood, are a common occurrence in nature. The simplest equation describing such physical phenomenon is the Darcy law 
\begin{equation}
\frac{\mu}{\beta} u=-\nabla p-\rho g \ \ee_2,
\label{eq:Darcy}
\end{equation}
where $u$, $p$, $\rho$ and $\mu$ are the velocity, pressure, density and dynamic viscosity of the fluid, respectively. The constant $\beta$ describes a property of the porous media and its known as the permeability. The term $\rho g \ee_2$ stands for the acceleration due to gravity in the direction $\ee_2 = (0,1)^\intercal $. From now on we use the renormalization $ \mu/\beta  \equiv 1 $ so that \eqref{eq:Darcy} becomes
\begin{equation*}
 u=-\nabla p-  \rho g \ \ee_2,
\end{equation*}
 Darcy law is valid for slow and viscous flows, and it was {first} derived experimentally by {Henry Darcy} (1856)  {and then} derived theoretically from the Navier-Stokes equations via homogenization (cf. \cite{Whitaker1986}). The free boundary Darcy flow, also known as Muskat problem (cf. \cite{Muskat31, Muskat32}),  is often used in order to model the dynamics of aquifiers or oil wells. When the free-interface is the graph $ \Gamma\pare{t} = \set{\pare{x, h\pare{x}} \ \left| \ x\in \bR \text{ or } \bS^1 \right. } $, which divides the space in the regions
 \begin{align*}
 \Omega^\pm\pare{t}= \set{\left. \pare{x, y}\in\bR^2 \text{ or } \bS^1\times \bR \ \right| \ y\lessgtr  h\pare{x, t} },
\end{align*}
with fluid of density
\begin{equation*}
\rho\pare{x,y,t} = \left\lbrace
\begin{array}{lll}
\rho_- & \text{ if } & \pare{x,y}\in\Omega^-\pare{t}\\
\rho_+ & \text{ if } & \pare{x,y}\in\Omega^+\pare{t}
\end{array}
\right.  ,
\end{equation*}
  (i.e. the fluid with density $ \rho_+ $ lies below and the fluid with density $ \rho_- $ lies above),   the evolution of the Muskat problem can be expressed as the contour equation
 \begin{equation} \label{eq:Muskat_completo}
 h_t = G\bra{h} \pare{\pare{\rho_+ - \rho_-} g h -\gamma \kappa} ,
 \end{equation}
 where $  G\bra{h}\psi $ is the Dirichlet-to-Neumann operator (cf. \cite{Lannes2013}), $ \kappa = \frac{h''}{\pare{1+\pare{h'}^2}^{3/2}} $ is the mean curvature of the interface, $ \gamma \geq 0 $ is the capillarity coefficient and $ g\geq 0 $ is the gravitational acceleration. \\

   The mathematical analysis of the Muskat problem has flourished in the past 20 years, see \cite{CG07, CCGO09, CCG09, CCG11, CCGS13, CGS16, AMS20, AL20, Matioc2018, Matioc2019, GGS20, CGSV17, Alazard2020, AB20} and the survey articles \cite{GL20, Gancedo2017}, but only recently due to the works of C\'ordoba \& Lazar \cite{CL18}, Gancedo \& Lazar \cite{GancedoLazar20} and Alazard \& Nguyen \cite{AN20_1, AN20_2} the problem of solvability of the Muskat problem in the critical Sobolev space $ \dot{H}^{\frac{d}{2} +1} $, where $ d $ is the dimension of the free interface, has been addressed. The three works \cite{AN20_1, AN20_2, CL18} study the stable, two phase gravity driven Muskat problem in $ \bR^2 $, i.e. $ \rho_+ > \rho_- $, $ g>0 $, $ \gamma =0 $ and the two fluids fill the two-dimensional space $ \bR^2 $. In such setting \eqref{eq:Muskat_completo} writes in the simplified form (here $ g $ is normalized to one)
   \begin{equation*}
   h_t\pare{x} =  \frac{\rho_+ - \rho_-}{2\pi} \  \pv \int_{\bR} \partial_x \arctan\pare{\frac{h\pare{x} - h\pare{x-y}}{y}}\dd y .
   \end{equation*}
   In \cite{CL18} a global well posedness result was proved for initial data $ f_0\in\dot{H}^{3/2}\cap \dot{H}^{5/2} $ with smallness assumption on $ \norm{f_0}_{\dot{H}^{3/2}} $ only, thus allowing initial data with arbitrarily large, albeit finite, slopes. In \cite{AN20_1} a global well-posedness result was proved when the initial is small w.r.t. the non-homogeneous norm
   \begin{equation*}
   \norm{u}_{H^{\frac{3}{2}, \frac{1}{3}}}^2 = \int \pare{1+\av{\xi}^2}^{3/2} \pare{\log\pare{4+\av{\xi}}}^{1/3} \av{\hat{u}\pare{\xi}}^2 \dd \xi , 
   \end{equation*}
   thus allowing initial data to have infinite slope, while \cite{CL18} and \cite{GancedoLazar20} address the problem of global solvability for initial data in $ \dot{H}^{\frac{d}{2}+1} \cap \dot{W}^{1, \infty} $ with smallness assumption in $  \dot{H}^{\frac{d}{2}+1} $ only. \\

We denote with $ \cH $ the Hilbert transform, with $ \Lambda = \cH \partial_x  $ the Calderón operator on $ \bS^1 $ or $ \bR $ and with $ \comm{A}{B}f = A\pare{Bf} - B\pare{Af} $. In \cite{GS19} the equation
 \begin{equation}\label{eq:Darcy_GC_asympt}
 f_t + g \Lambda f + \gamma \Lambda^3 f = \partial_x \comm{\cH	}{f}\pare{g\Lambda f + \gamma \Lambda^3 f}, 
 \end{equation}
 was derived, a thorough analysis of the case $ g > 0, \ \gamma \geq 0 $ was performed in \cite{GS19_2}. The equation \eqref{eq:Darcy_GC_asympt} captures the dynamics of the one-phase Muskat problem (alternatively known as the Hele-Shaw problem) subject to gravity and surface tension up to quadratic order of the nonlinearity in small amplitude number regime, we refer the reader to \cite{GS19_4, GS19_3, GS2020, BG19, CGSW19, LM2017, PM2012, EMM2012, ABN12} for further results on asymptotic models for free boundary systems. The equation we are interested to study in the present manuscript is the capillarity-driven version of \eqref{eq:Darcy_GC_asympt}, i.e. setting $ \pare{g, \gamma} = \pare{0, 1} $ we obtain the equation\footnote{The author would like to mention the very recent manuscript \cite{MM2020} in which the authors prove local solvability for arbitrary initial data in $ W^{s, p}\pare{\bR}, \ p\in\left( 1, 2\right], \ s\in \pare{  1 + \frac{1}{p}, 2} $ for the two-phases full Muskat problem, i.e. the full system of the two-phase version of \eqref{eq:Darcy_capillarity}. }
 \begin{equation}\label{eq:Darcy_capillarity}
 \left\lbrace
 \begin{aligned}
 &  f_t +\Lambda^3 f = \partial_x \comm{\cH}{f}\Lambda^3 f, \\
 & \left. f\right|_{t=0} = f_0 .
 \end{aligned}
 \right. 
 \end{equation}
 It is immediate to see that the transformation
 \begin{align}\label{eq:scaling}
 f\pare{x, t}\mapsto \frac{1}{\lambda} f \pare{\lambda x , \lambda^3 t}, 
 &&
 f_0\pare{x}\mapsto \frac{1}{\lambda} f_0 \pare{\lambda x}
 \end{align}
 where $ \lambda > 0 $ if the space domain is $ \bR $ and $ \lambda\in\bN^\star $ if the space domain is $ \bS^1 $,
 generates a one-parameter family of solutions for \eqref{eq:Darcy_capillarity}, it is hence a classical consideration in the analysis of nonlinear partial differential equations to look for solutions in functional spaces whose norm is invariant w.r.t. the transformation \eqref{eq:scaling}, a simple example of such spaces is (recall that here the space dimension is one)
 \begin{align*}
 L^\infty\pare{\bR_+; \dot{H}^{3/2}}\cap L^2\pare{\bR_+; \dot{H}^3 }, 
 &&
 L^4\pare{\bR_+ ; \dot{H}^{9/4}}. 
 \end{align*}

We prove in particular that for any $ f_0\in\dot{H}^{3/2} $ there exists a $  T = T\pare{f_0} > 0 $ and a unique solution in $ L^4\pare{\bra{0, T}; \dot{H}^{9/4}} $ of \eqref{eq:Darcy_capillarity} stemming from $ f_0 $, which is global if $ f_0 $ is small in $ \dot{H}^{3/2} $. Such result is more general than any well posedness result known, up to date, for the full Muskat problem\footnote{The author would like to point out that shortly after the publication of the preprint version of the present manuscript T. Alazard and Q.-H. Nguyen proved in \cite{AN20_3} that the gravity-driven two-phases 1D Muskat problem is locally well-posed in the \textit{nonhomogeneous} critical Sobolev space $ H^{3/2} $.  }. This is rather surprising since asymptotic models tend to be less regular compared to the full-models from which they derive (cf. \cite{ABN14, ABN19, BG20}) lacking some fine nonlinear cancellation which is present in the full system. Such result is possible thanks to a surprising commutation property of the bilinear truncation of the Dirichlet-Neumann operator. We refer the interested reader to Lemma \ref{lem:comm_estimate} for a detailed statement of the key commutation which is the fundamental tool that allows us to prove the main result of the present manuscript.

 \section{Main result and notation}

 We denote with $ C $ a positive constant whose explicit value may vary from line to line. Given a metric space $ \pare{ X, d_X } $, any  $ x_0\in X $ and $ r > 0 $ we denote with $ B_X\pare{x_0, r} $ the open ball of center $ x_0 $ and radius $ r $ w.r.t. the distance function $ d_X $. We denote with $ \ii =\sqrt{-1} $ the imaginary unit and with $ X' $ the dual space of $ X $.  \\

 From now on we consider the space domain on which \eqref{eq:Darcy_capillarity} is defined to be the one-dimensional torus $ \bS^1 $, though the computations performed in the present article can be easily adapted to the case of the one dimensional real line $ \bR $. We denote with $ \cS $ the space of Schwartz functions on $ \bS^1 $ and with $ \cS_0 $ the space of Schwartz functions with zero average. Let us denote with $ {\bf e}_n\pare{x} = e^{\ii nx}, \ n\in\bZ $ and let us consider a $ v \in \cS' $, the Fourier transform of $ v $ is defined as
\begin{align*}
\hat{v}\pare{n} = \frac{1}{2\pi} \angles{v, {\bf e}_{-n}} = \frac{1}{2\pi} \int _{-\pi}^{\pi} v\pare{x} e^{-\ii nx} \dd x, && n\in\bZ ,
\end{align*} 
since for any $ n\in\bZ $ the function $ {\bf e}_n\in\cS $ the above integral is well defined. For details we refer the interested reader to \cite{Iorio2001}. 
We define the Calder\'on operator $ \Lambda $ as the Fourier multiplier $ \widehat{\Lambda v}\pare{n} = \av{n}\hat{v}\pare{n} $, and for any $ b \in \cC \pare{ \pare{0, \infty} ; [0, \infty ) } $ we define the operator $ \widehat{b \pare{\Lambda} v}\pare{n} = b \pare{\av{n}} \hat{v}\pare{n} $. 
  We denote with
 \begin{equation*}
 \dot{H}^s = \set{v\in\cS_0' \ \left| \  \Lambda^s v\in L^2 \right. },
 \end{equation*}
 for any $ s\in\bR $. The space $ \dot{H}^s $ is endowed with the norm
\begin{equation*}
\norm{v}_{\dot{H}^s}^2 = \norm{\Lambda^s v}_{L^2}^2 = \sum_n \av{n}^{2s}\av{\hat{v}\pare{n}}^2.
\end{equation*} 
Let us remark that the homogeneous Sobolev space $ \dot{H}^s $ is the subset of the more familiar non-homogeneous Sobolev space $ {H}^s = \set{v\in\cS' \ \left| \  \pare{ 1+ \Lambda }^s v\in L^2 \right. } $ with zero average. 
  We use the abbreviated notation
 \begin{align*}
 \norm{v}_s = \norm{v}_{\dot{H}^s}, && \norm{v}_{L^p_T \dot{H}^s } = \norm{v}_{L^p\pare{\bra{0, T}; \dot{H}^s}} ,  && \norm{v}_{L^p \dot{H}^s } =  \norm{v}_{L^p_\infty \dot{H}^s }, && T\in\left( 0, \infty\right], \ p\in\bra{1, \infty}. 
 \end{align*}

 The main result we prove is the following one
 \begin{theorem}\label{thm:main}
 Given $ f_0 \in  \dot{H}^{3/2} $ there exists a $ T = T\pare{ f_0 } > 0 $ such that the system \eqref{eq:Darcy_capillarity} has a unique solution in the space $ L^4\pare{\bra{0, T} ; \dot{H}^{9/4}} $, which, in addition, belongs to the space
\begin{equation*}
f\in\cC\pare{\bra{0, T}; \dot{H}^{3/2}}\cap L^2\pare{\bra{0, T}; \dot{H}^3}. 
\end{equation*} 
  There exists an $ \varepsilon_0 > 0 $ such that if
 \begin{equation*}
\norm{f_0}_{3/2}\leq \varepsilon_0,  
 \end{equation*}
 then $ T=\infty $ and the solution is global.
 \end{theorem}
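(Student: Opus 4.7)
The plan is to first solve \eqref{eq:Darcy_capillarity} at the scaling-critical level $L^4\pare{\bra{0,T}; \dot{H}^{9/4}}$ by Picard iteration on the Duhamel formulation
$$
f(t) = e^{-t\Lambda^3}f_0 + \int_0^t e^{-(t-s)\Lambda^3} \partial_x \comm{\cH}{f}\Lambda^3 f \, \dd s,
$$
and then upgrade the resulting solution to the energy class $\cC\pare{\bra{0,T}; \dot{H}^{3/2}}\cap L^2_T\dot{H}^3$ by testing the equation against $\Lambda^3 f$ in $L^2$. The dissipative semigroup $e^{-t\Lambda^3}$ has Fourier symbol $e^{-t\av{n}^3}$, and a short Littlewood--Paley computation yields the maximal regularity bounds
$$
\norm{e^{-t\Lambda^3}f_0}_{L^\infty_T\dot{H}^{3/2}} + \norm{e^{-t\Lambda^3}f_0}_{L^2_T \dot{H}^3} + \norm{e^{-t\Lambda^3}f_0}_{L^4_T \dot{H}^{9/4}} \lesssim \norm{f_0}_{3/2},
$$
together with their analogues for the Duhamel integral once the source is placed in a suitable dual scale such as $L^{4/3}_T \dot{H}^{3/4}$.

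The heart of the argument is then a scaling-consistent bilinear inequality of the type
$$
\norm{\partial_x \comm{\cH}{f}\Lambda^3 g}_{L^{4/3}_T \dot{H}^{3/4}} \lesssim \norm{f}_{L^4_T\dot{H}^{9/4}} \norm{g}_{L^4_T\dot{H}^{9/4}}.
$$
A naive bound on $\partial_x \cH \pare{f\Lambda^3 g}$ would fail at this critical level, since the ordinary product rule exhausts the regularity budget. The salvation is the commutator structure formalized in Lemma \ref{lem:comm_estimate}: the Fourier symbol of $\comm{\cH}{f}$ vanishes on the frequency region where the output and the argument share the same sign, producing a non-trivial derivative gain precisely in the dangerous high-low paraproduct regime. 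Combined with a Bony decomposition and H\"older in time, this cancellation closes the displayed bilinear bound.

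With the two ingredients above, a Banach contraction on a small ball of $L^4_T\dot{H}^{9/4}$ produces a unique local solution on $\bra{0,T}$, with $T = T(f_0)$ chosen so that $\norm{e^{-t\Lambda^3}f_0}_{L^4_T \dot{H}^{9/4}}$ is sufficiently small; this is always possible for any $f_0\in\dot{H}^{3/2}$ by dominated convergence. Testing the equation against $\Lambda^3 f$ in $L^2$ and reusing Lemma \ref{lem:comm_estimate} to bound the nonlinear contribution by $C \norm{f}_{3/2}\norm{f}_3^2$ both promotes $f$ to $\cC\pare{\bra{0,T}; \dot{H}^{3/2}}\cap L^2_T\dot{H}^3$ and yields the differential inequality
$$
\tfrac{1}{2}\ddt \norm{f}_{3/2}^2 + \bigl(1 - C \norm{f}_{3/2}\bigr) \norm{f}_3^2 \leq 0.
$$
Under the smallness hypothesis $\norm{f_0}_{3/2}\leq \varepsilon_0$ with $C\varepsilon_0 < 1$ the parenthesis remains strictly positive, so $\norm{f(t)}_{3/2}$ is non-increasing and the solution extends globally. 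The only truly delicate step is the critical bilinear estimate: at the scaling threshold any wasted derivative is fatal, and it is precisely the exact symbolic cancellation furnished by Lemma \ref{lem:comm_estimate} that makes the closure of both the fixed point and the energy identity possible.
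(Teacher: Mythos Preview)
Your overall strategy coincides with the paper's: a Kato-type fixed point in $L^4_T\dot H^{9/4}$ driven by Lemma~\ref{lem:comm_estimate}, followed by a $\dot H^{3/2}$ energy estimate. However, the displayed bilinear inequality---which you call the heart of the argument---is wrong as stated. Placing the forcing $\partial_x\comm{\cH}{f}\Lambda^3 g$ in $L^{4/3}_T\dot H^{3/4}$ does not close. On the spatial side, Lemma~\ref{lem:comm_estimate} with $s=7/4$, $\sigma=3$ yields
\[
\norm{\partial_x\comm{\cH}{f}\Lambda^3 g}_{3/4}\leq C\norm{f}_{2+\alpha}\norm{g}_{13/4-\alpha},
\]
and there is no $\alpha\in[0,\sigma]$ making both exponents $\leq 9/4$ (you would need $\alpha\leq 1/4$ and $\alpha\geq 1$ simultaneously). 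On the temporal side, H\"older on two $L^4_T$ factors only produces $L^2_T$, not $L^{4/3}_T$. The correct dual scale is $L^2_T L^2$: taking $s=1$, $\sigma=3$, $\alpha=1$ in Lemma~\ref{lem:comm_estimate} gives
\[
\norm{\partial_x\comm{\cH}{f}\Lambda^3 g}_{L^2}\leq C\norm{f}_{9/4}\norm{g}_{9/4},
\]
and then $L^4_T\times L^4_T\hookrightarrow L^2_T$ closes the loop. The paper packages this computation as an energy estimate on the Duhamel piece $U(\phi,\psi)$ (Lemma~\ref{lem:cont_bilinear_map}) rather than through an explicit maximal-regularity pairing, but the content is identical once the right scale is used.

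The remaining steps of your proposal are correct and in places slightly different from the paper. Your energy inequality $\tfrac{1}{2}\ddt\norm{f}_{3/2}^2 + (1-C\norm{f}_{3/2})\norm{f}_3^2\leq 0$ follows from Lemma~\ref{lem:comm_estimate} with $s=1$, $\sigma=3$, $\alpha=1/4$, and gives global existence for small data by a continuation argument; the paper instead observes that the bound $\norm{e^{-\bullet\Lambda^3}f_0}_{L^4\dot H^{9/4}}\leq 2^{-1/2}\norm{f_0}_{3/2}$ is uniform in $T$, so the fixed point itself is already global. For large data, the paper uses a high/low frequency splitting of $f_0$ to make the linear evolution small in $L^4_T\dot H^{9/4}$, whereas your dominated-convergence argument (using that $t\mapsto\norm{e^{-t\Lambda^3}f_0}_{9/4}^4\in L^1(\bR_+)$) is shorter and equally valid.
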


 In Section \ref{sec:preliminaries} we introduce some preliminary results which we use in Section \ref{sec:proof} in order to prove Theorem \ref{thm:main}.

 \section{Preliminaries } \label{sec:preliminaries}

Let us state the \textit{Minkowsky integral inequality} (cf. \cite[Appendix A]{Stein1970}) : let us consider $ \pare{S_1, \mu_1} $ and $ \pare{S_2, \mu_2} $ two $ \sigma $--finite measure spaces and let $ f : S_1\times S_2 \to \bR $ be measurable, $ p\in[1, \infty) $, then the following inequality holds true:
\begin{equation}\label{eq:Minkowsky_integral}
 \left[\int _{S_{2}}\left|\int _{S_{1}}f(x,y)\,\mu _{1}(\mathrm {d} x)\right|^{p}\mu _{2}(\mathrm {d} y)\right]^{\frac {1}{p}}\leqslant \int _{S_{1}}\left(\int _{S_{2}}|f(x,y)|^{p}\,\mu _{2}(\mathrm {d} y)\right)^{\frac {1}{p}}\mu _{1}(\mathrm {d} x).
\end{equation} 
The above equality holds as well when $ p=\infty $ with obvious modifications.\\

 The proof of Theorem \ref{thm:main} relies on a fixed point argument, in particular the fixed point theorem we rely on is the following one (see \cite[Lemma 5.5 p. 207]{BCDbook})
 \begin{lemma}\label{lem:fixed_point}
 Let $ X $ be a Banach space, $ \cB $ a continuous bilinear map form $ X \times X $ to $ X $  and $ r > 0 $ such that
 \begin{align*}
 r < \frac{1}{4\norm{\cB} } , 
 &&
 \norm{\cB} = \sup _{u,v\in B_X\pare{0, 1}} \norm{\cB\pare{u, v}}_X . 
 \end{align*}
 For any $ x_0$ in the ball $ B_X\pare{0, r } = \set{y\in X \ : \ \norm{y}_X  < r} $, there exists a unique $ x\in B_X\pare{0, 2r} $ such that
 \begin{equation*}
 x= x_0 + \cB\pare{x,x}.
 \end{equation*}
 \end{lemma}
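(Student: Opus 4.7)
The plan is to realize this as a standard Picard/contraction-mapping argument. Define the nonlinear map
\begin{equation*}
\Phi : X \to X, \qquad \Phi(x) = x_0 + \mathcal{B}(x,x),
\end{equation*}
whose fixed points are exactly the solutions of $x = x_0 + \mathcal{B}(x,x)$. I would work on the \emph{closed} ball $\overline{B_X(0,2r)}$, which is a complete metric space with the distance inherited from $X$, prove that $\Phi$ maps it into itself and is a strict contraction there, and then argue that the unique fixed point actually lies in the open ball $B_X(0,2r)$.

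First I would carry out the stability estimate. Using bilinearity and the definition of $\|\mathcal{B}\|$, for $x\in \overline{B_X(0,2r)}$ one has
\begin{equation*}
\|\Phi(x)\|_X \le \|x_0\|_X + \|\mathcal{B}\|\,\|x\|_X^{\,2} \le r + 4r^2\|\mathcal{B}\|.
\end{equation*}
The hypothesis $r < 1/(4\|\mathcal{B}\|)$ gives $4r^2\|\mathcal{B}\| < r$, hence $\|\Phi(x)\|_X < 2r$; in particular $\Phi$ sends $\overline{B_X(0,2r)}$ into the open ball $B_X(0,2r)$. Next, for $x,y\in \overline{B_X(0,2r)}$, the polarisation identity
\begin{equation*}
\mathcal{B}(x,x)-\mathcal{B}(y,y) = \mathcal{B}(x-y,x) + \mathcal{B}(y,x-y)
\end{equation*}
combined with the norm of $\mathcal{B}$ yields
\begin{equation*}
\|\Phi(x)-\Phi(y)\|_X \le \|\mathcal{B}\|\bigl(\|x\|_X+\|y\|_X\bigr)\|x-y\|_X \le 4r\|\mathcal{B}\|\,\|x-y\|_X,
\end{equation*}
and $4r\|\mathcal{B}\| < 1$ by hypothesis, so $\Phi$ is a strict contraction on $\overline{B_X(0,2r)}$.

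The Banach fixed-point theorem then produces a unique $x\in \overline{B_X(0,2r)}$ with $\Phi(x)=x$; the stability estimate above in fact forces $\|x\|_X < 2r$, so $x\in B_X(0,2r)$. For uniqueness within the open ball, if $x, \tilde x \in B_X(0,2r)$ are two fixed points then the bilinear estimate above gives
\begin{equation*}
\|x-\tilde x\|_X \le 4r\|\mathcal{B}\|\,\|x-\tilde x\|_X,
\end{equation*}
which, since $4r\|\mathcal{B}\|<1$, forces $x = \tilde x$. The only place the hypothesis is really used is the numerical inequality $4r\|\mathcal{B}\|<1$, which simultaneously closes both the stability and the contraction estimates; there is no genuine obstacle beyond choosing the correct radius $2r$ (rather than $r$) for the invariant ball, because the inhomogeneous term $x_0$ has size up to $r$ and the quadratic term must be absorbed into the remaining margin. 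Alternatively one can run the Picard iteration $x_{n+1}=x_0+\mathcal{B}(x_n,x_n)$ with $x_{-1}=0$; the same two estimates show that $(x_n)$ stays in $\overline{B_X(0,2r)}$ and is Cauchy with ratio $4r\|\mathcal{B}\|<1$, converging to the desired fixed point.
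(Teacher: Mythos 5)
Your proof is correct: the self-mapping and contraction estimates on the closed ball of radius $2r$ both close precisely because $4r\norm{\cB}<1$, and your final polarisation argument settles uniqueness in the open ball. The paper does not prove this lemma at all (it simply cites Lemma 5.5 of Bahouri--Chemin--Danchin), and your contraction/Picard argument is essentially the standard proof given in that reference, so nothing further is needed.
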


The next result we need is a particular commutation property which is specific to the nonlinearity of the equation \eqref{eq:Darcy_capillarity}
\begin{lemma}\label{lem:comm_estimate}
Let $ s, \sigma\geq 0 , \ \alpha \in\bra{0, \sigma} $ and $ \phi\in \dot{H}^{s + \frac{1}{4} +\alpha }, \ \psi \in \dot{H}^{\sigma+\frac{1}{4} - \alpha} $ then we have that
\begin{equation*}
\norm{\Lambda^s \pare{ \comm{\cH}{\phi}\Lambda^\sigma \psi }}_{L^2} \leq C \norm{\phi}_{s+\frac{1}{4} + \alpha}\norm{\psi}_{\sigma +\frac{1}{4} -\alpha}. 
\end{equation*}
\end{lemma}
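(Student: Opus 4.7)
The plan is to prove the estimate by a direct Fourier-side computation on $\bS^1$, exploiting the sign restriction induced by the commutator with $\cH$. Since $\widehat{\cH u}(n) = -\ii\,\sgn(n)\,\hat u(n)$, a direct computation yields
\[
\widehat{\comm{\cH}{\phi}\Lambda^\sigma\psi}(n) = -\ii \sum_{k\in\bZ}\pare{\sgn(n) - \sgn(k)}\,\hat\phi(n-k)\,\av{k}^\sigma\,\hat\psi(k),
\]
and the symbol $\sgn(n) - \sgn(k)$ vanishes unless $n$ and $k$ have strictly opposite signs. On the non-trivial region $nk \leq 0$ one has the identity $\av{n-k} = \av{n}+\av{k}$, whence the key lower bounds $\av{n}\leq \av{n-k}$ and $\av{k}\leq \av{n-k}$.

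Next, I introduce the non-negative $\ell^2$-sequences $P(m) := \av{m}^{s+1/4+\alpha}\av{\hat\phi(m)}$ and $Q(k) := \av{k}^{\sigma+1/4-\alpha}\av{\hat\psi(k)}$, whose $\ell^2$-norms are $\norm{\phi}_{s+1/4+\alpha}$ and $\norm{\psi}_{\sigma+1/4-\alpha}$ respectively. Combining the sign restriction with $\av{n}^s\leq \av{n-k}^s$ and factoring out the weights that produce $P$ and $Q$ gives
\[
\av{n}^s\av{\widehat{\comm{\cH}{\phi}\Lambda^\sigma\psi}(n)} \leq 2\sum_{k :\, nk\leq 0}\frac{P(n-k)\,Q(k)}{\av{n-k}^{1/4+\alpha}\av{k}^{1/4-\alpha}}.
\]
A Cauchy--Schwarz inequality in the $k$-variable, placing the full weight on the $P$-factor, then produces
\[
\av{n}^{2s}\av{\widehat{\comm{\cH}{\phi}\Lambda^\sigma\psi}(n)}^2 \leq 4\, \norm{\psi}^2_{\sigma+1/4-\alpha}\sum_{k :\, nk\leq 0}\frac{P(n-k)^2}{\av{n-k}^{1/2+2\alpha}\av{k}^{1/2-2\alpha}}.
\]
Summing over $n$, exchanging the order of summation, and making the change of variable $m = n-k$, the $P^2$-factor decouples from the $k$-sum, and the entire estimate reduces to the elementary bound
\[
\sum_{j=1}^{\av{m}}\frac{1}{j^{1/2-2\alpha}} \leq C_\alpha\,\av{m}^{1/2+2\alpha},
\]
valid for any $\alpha\geq 0$ and obtained by splitting into the three sub-cases $\alpha<1/4$, $\alpha=1/4$, $\alpha>1/4$.

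The factor $\av{m}^{1/2+2\alpha}$ cancels exactly the outer weight $\av{m}^{-(1/2+2\alpha)}$, so that $\sum_n \av{n}^{2s}\av{\widehat{\comm{\cH}{\phi}\Lambda^\sigma\psi}(n)}^2 \leq C\norm{\phi}^2_{s+1/4+\alpha}\norm{\psi}^2_{\sigma+1/4-\alpha}$, which is the claim after Plancherel. The main difficulty lies in setting up the Cauchy--Schwarz split so that the remaining $k$-sum has \emph{exactly} the right weight to cancel the outer $P$-weight; in particular, at the critical exponent $\alpha = 1/4$ a naive Schur-type test would produce a logarithmic loss, and the cancellation is only made possible by the commutator constraint $\av{k}\leq \av{n-k}$, which confines the inner sum to the finite range $\set{1,\dots,\av{m}}$. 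The boundary contributions from $n=0$ or $k=0$ cause no trouble thanks to the zero-average condition built into $\dot H^\bullet$.
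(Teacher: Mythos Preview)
Your proof is correct and shares the paper's key observation: on the Fourier side the commutator symbol $\sgn(n)-\sgn(k)$ forces $n$ and $k$ to have opposite signs, whence $\av{n}\leq\av{n-k}$ and $\av{k}\leq\av{n-k}$ (in the paper's labeling, $\av{n}<\av{k}$ and $\av{n-k}<\av{k}$), so all derivatives can be shifted onto the $\phi$-factor. The two arguments diverge only in the closing step. After this shift the paper recognises the resulting sum as $\norm{\Lambda^{s+\alpha}\Phi\,\Lambda^{\sigma-\alpha}\Psi}_{L^2}$ for the Fourier-positive majorants $\Phi,\Psi$, and finishes in one line with H\"older and the embedding $\dot H^{1/4}\hookrightarrow L^4$. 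You instead stay on the Fourier side, apply Cauchy--Schwarz in $k$, and reduce to the elementary bound $\sum_{j=1}^{\av{m}} j^{-(1/2-2\alpha)}\lesssim \av{m}^{1/2+2\alpha}$; this is effectively a hands-on proof of the same product estimate, with the commutator constraint $\av{k}\leq\av{m}$ playing the role that the Sobolev embedding plays in the paper. The paper's route is shorter and more conceptual; yours is self-contained and makes the role of the finite summation range (hence the absence of a logarithmic loss at $\alpha=1/4$) completely explicit.
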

\begin{proof}
Let us remark that
\begin{equation*}
\widehat{\comm{\cH}{\phi}\Lambda^\sigma \psi }\pare{n} =  \ii  \sum_{k\in\bZ} \pare{-\sgn\pare{n} + \sgn\pare{n-k}} \av{n-k}^\sigma \hat{\phi}\pare{k}\hat{\psi}\pare{n-k}. 
\end{equation*}
Now we have that
\begin{align*}
-\sgn\pare{n} + \sgn\pare{n-k} \neq 0 && \Leftrightarrow && \pare{ n> 0 \wedge n-k < 0 } \vee \pare{n<0 \wedge n-k > 0} && \Leftrightarrow && 0 < \av{n} < \av{k} ,  
\end{align*}
as a consequence we obtain that
\begin{align}\label{eq:monotonicity_Fourier_modes}
\av{n} < \av{k}, && \text{ and } && \av{n-k} < \av{k}. 
\end{align}
Using the monotonicity property \eqref{eq:monotonicity_Fourier_modes} we obtain that
\begin{align*}
\norm{\Lambda^s \pare{ \comm{\cH}{\phi}\Lambda^\sigma \psi }}_{L^2} ^2 \leq & \ C \sum_n \av{n}^{2s} \av{
\sum_{k}  \av{n-k}^\sigma \av{\hat{\phi}\pare{k}}\av{\hat{\psi}\pare{n-k}}
}^2 , \\
 \leq & \ C \sum_n \av{
\sum_{k} \av{k}^{s+\alpha}  \av{n-k}^{\sigma -\alpha}\av{\hat{\phi}\pare{k}}\av{\hat{\psi}\pare{n-k}}
}^2 , \\
\leq & \ C \norm{\Lambda^{s+\alpha}\Phi \ \Lambda^{\sigma-\alpha}\Psi}_{L^2}^2 , 
\end{align*}
where
\begin{align*}
\hat{\Phi}\pare{n} = \av{\hat{\phi}\pare{n}}, && \hat{\Psi}\pare{n} = \av{\hat{\psi}\pare{n}}.
\end{align*}
We use now the H\"older inequality and the embedding $ \dot{H}^{1/4}\hra L^4 $
\begin{equation*}
\norm{\Lambda^{s+\alpha}\Phi \ \Lambda^{\sigma-\alpha}\Psi}_{L^2} \leq C  \norm{\Phi}_{s+\frac{1}{4} + \alpha}\norm{\Psi}_{\sigma +\frac{1}{4} -\alpha} = C  \norm{\phi}_{s+\frac{1}{4} + \alpha}\norm{\psi}_{\sigma +\frac{1}{4} -\alpha}, 
\end{equation*}
concluding the proof.
\end{proof}

Let $ \phi, \psi $ be given functions, let us denote with $ U=U\pare{\phi, \psi} $ the solution to the linear fractional-diffusion equation with forcing
\begin{equation}\label{eq:U}
\left\lbrace
\begin{aligned}
& U\pare{\phi, \psi}_t + \Lambda^3 U\pare{\phi, \psi} = \partial_x \comm{\cH}{\phi}\Lambda^3\psi, \\
& \left. U\pare{\phi, \psi} \right|_{t=0} = 0.
\end{aligned}
\right. 
\end{equation}

\begin{lemma}\label{lem:cont_bilinear_map}
For any $ \phi\in L^4_T \dot{H}^{s + \frac{3}{4}}, \ s\geq 1/2 $ and  $ \psi\in L^4_T\dot{H}^{9/4} $ the following inequality holds true
\begin{equation*}
\norm{U\pare{\phi, \psi}}_{L^4_T \dot{H}^{s+\frac{3}{4}}} \leq C \norm{\phi}_{L^4_T \dot{H}^{s+\frac{3}{4}}} \norm{\psi}_{L^4_T \dot{H}^{9/4}}. 
\end{equation*}
\end{lemma}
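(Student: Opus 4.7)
The plan is to combine Duhamel's formula for $U$ with a Young convolution estimate applied at each Fourier mode, and then close the bilinear estimate by invoking Lemma~\ref{lem:comm_estimate}.

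Set $F = \partial_x \comm{\cH}{\phi}\Lambda^3\psi$. Duhamel's formula on the Fourier side reads
\begin{equation*}
\hat U(t, n) = \int_0^t e^{-(t-\tau)\av{n}^3}\, \hat F(\tau, n) \, \dd \tau, \qquad n \in \bZ \setminus \{0\}.
\end{equation*}
Taking absolute values, $|\hat U(t,n)|$ is bounded by a convolution in time between $\mathbf{1}_{\{t\geq 0\}}e^{-t\av{n}^3}$ and $|\hat F(\cdot, n)|$. Young's inequality in $t$ with indices $(p,q,r)=(4,2,4/3)$ (so that $1/r+1/q = 1+1/p$), together with the explicit computation $\|e^{-t\av{n}^3}\|_{L^{4/3}(\bR_+)} = C\av{n}^{-9/4}$, then produces the frequency-by-frequency bound
\begin{equation*}
\|\hat U(\cdot, n)\|_{L^4_T} \leq C \av{n}^{-9/4} \|\hat F(\cdot, n)\|_{L^2_T}.
\end{equation*}
Multiplying by $\av{n}^{s+3/4}$, squaring, summing in $n$, and interchanging the order of integration via the Minkowski inequality~\eqref{eq:Minkowsky_integral} in the direction $L^4_T \ell^2_n \leq \ell^2_n L^4_T$ (which is the correct direction precisely because the outer exponent $4$ exceeds the inner exponent $2$), I would arrive at
\begin{equation*}
\|U\|_{L^4_T \dot H^{s + 3/4}} \leq C \|F\|_{L^2_T \dot H^{s - 3/2}}.
\end{equation*}

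It then remains to bound $F$ in $L^2_T \dot H^{s - 3/2}$. Since $\partial_x$ acts as a Fourier multiplier of order one,
\begin{equation*}
\|F(t)\|_{\dot H^{s-3/2}} = \|\comm{\cH}{\phi(t)}\Lambda^3\psi(t)\|_{\dot H^{s-1/2}},
\end{equation*}
and the commutation estimate of Lemma~\ref{lem:comm_estimate}, invoked with parameters $s \mapsto s-1/2$, $\sigma = 3$, and $\alpha = 1$ (admissible since the hypothesis $s \geq 1/2$ gives $s-1/2 \geq 0$, and $1 \in [0,3]$), yields the pointwise-in-time bound
\begin{equation*}
\|F(t)\|_{\dot H^{s-3/2}} \leq C \|\phi(t)\|_{\dot H^{s+3/4}}\|\psi(t)\|_{\dot H^{9/4}}.
\end{equation*}
A Cauchy--Schwarz argument in $t$ then converts the product of two $L^4_T$-integrable factors into the desired $L^2_T$ control, and the proof is complete.

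The key point, and potentially the only delicate step, is the choice $\alpha = 1$ in Lemma~\ref{lem:comm_estimate}: it is the unique value for which the $1/4$-loss contributed by each instance of the embedding $\dot H^{1/4}\hra L^4$ used inside that lemma is exactly compensated by the $3/2$ derivatives gained through the parabolic smoothing of the semigroup $e^{-t\Lambda^3}$ via Young's inequality. The critical role of the specific exponent $L^4_T$ on the solution is likewise visible in the Minkowski step, which runs in the needed direction only because $4\geq 2$.
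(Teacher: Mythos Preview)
Your argument is correct, but it proceeds along a different route from the paper's. The paper performs an $\dot H^s$ energy estimate on the equation for $U$: pairing with $\Lambda^{2s}U$, absorbing half of the dissipation, and invoking Lemma~\ref{lem:comm_estimate} (with the same choice of parameters you identify) yields
\[
\norm{U}_{L^\infty_T \dot H^{s}}^2 + \norm{U}_{L^2_T \dot H^{s+3/2}}^2 \leq C\norm{\phi}_{L^4_T \dot H^{s+3/4}}^2\norm{\psi}_{L^4_T \dot H^{9/4}}^2,
\]
after which the $L^4_T\dot H^{s+3/4}$ bound follows by interpolation between the two endpoint norms. Your approach instead exploits the explicit semigroup on the Fourier side and a frequency-wise Young convolution inequality to land directly in $L^4_T\dot H^{s+3/4}$, bypassing the interpolation step entirely. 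Both arguments reduce to the same pointwise-in-time commutator bound and the same Cauchy--Schwarz-in-$t$ closing step. Your route is slightly more streamlined for the stated inequality; the paper's route has the advantage of delivering the endpoint controls $L^\infty_T\dot H^{s}$ and $L^2_T\dot H^{s+3/2}$ along the way, which are used again (for $s=3/2$) in the final paragraph of the proof of Theorem~\ref{thm:main}.
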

\begin{proof}
We write $ U $ instead of $ U\pare{\phi, \psi} $ for sake of simplicity. A standard energy estimate combined with Lemma \ref{lem:comm_estimate} give that
\begin{align*}
\frac{1}{2}\ddt \norm{U}_{s}^2 + \norm{U}_{s+\frac{3}{2}}^2 = & \ \int \Lambda^s\pare{\partial_x\comm{\cH}{\phi}\Lambda^3 \psi} \Lambda^s U \dx, \\
\leq & \ \frac{1}{2}\norm{U}_{s+\frac{3}{2}}^2 +  C \norm{\Lambda^{s-\frac{1}{2}}\pare{ \comm{\cH}{\phi}\Lambda^3\psi }}_{L^2}^2 , \\
\leq & \ \frac{1}{2}\norm{U}_{s+\frac{3}{2}}^2 +  C\norm{\phi}_{s+\frac{3}{4}}^2\norm{\psi}_{9/4}^2 , 
\end{align*}
hence, integrating in time, we obtain the bound
\begin{equation*}
\norm{U}_{L^\infty_T \dot{H}^s}^2 + \norm{U}_{L^2_T\dot{H}^{s+\frac{3}{2}}}^2 \leq  C \norm{\phi}_{L^4_T \dot{H}^{s+\frac{3}{4}}}^2  \norm{\psi}_{L^4_T \dot{H}^{9/4}}^2. 
\end{equation*} 
The interpolation estimate
\begin{equation*}
\norm{U }_{L^4_T \dot{H}^{s+\frac{3}{4}}} \leq \norm{U}_{L^\infty_T \dot{H}^s}^{1/2}  \norm{U}_{L^2_T\dot{H}^{s+\frac{3}{2}}}^{1/2 } \leq C \pare{ \norm{U}_{L^\infty_T \dot{H}^s} + \norm{U}_{L^2_T\dot{H}^{s+\frac{3}{2}}} }, 
\end{equation*}
combined with the Young inequality $ ab\leq \frac{1}{2}\pare{a^2 + b^2} $ provides the desired control. 
\end{proof}

\section{Proof of Theorem \ref{thm:main}}	\label{sec:proof}

Solving the Cauchy problem \eqref{eq:Darcy_capillarity} is equivalent to find a solution to the integral equation
\begin{align}\label{eq:Duhamel_formulation}
f\pare{x,t} = e^{-t\Lambda^3}f_0\pare{x} + \int_0^t e^{-\pare{t-t'}\Lambda^3}\partial_x \comm{\cH}{f}\Lambda^3 f\pare{x, t'}\dd t', && \pare{x, t}\in\bS^1\times \bra{0, T} ,
\end{align}
we  hence prove that the map
\begin{equation}\label{eq:functional_equality}
f \longmapsto e^{-t\Lambda^3} f_0 + U\pare{f, f}, 
\end{equation}
where $ U\pare{f,f }$ is defined as the solution of \eqref{eq:U}, has a unique fixed point in the space $ L^4_T \dot{H}^{9/4} $. Indeed for any $ T\in\left(0, \infty\right] $ and $ \phi, \psi \in  L^4_T \dot{H}^{9/4} $ we invoke Lemma \ref{lem:cont_bilinear_map} obtaining that
\begin{equation*}
\norm{U\pare{\phi, \psi}}_{L^4_T \dot{H}^{9/4}} \leq C \norm{\phi}_{L^4_T \dot{H}^{9/4}} \norm{\psi}_{L^4_T \dot{H}^{9/4}},
\end{equation*}
 thus proving that $ U $ is a continuous bilinear map form $ \pare{ L^4_T \dot{H}^{9/4}}^2 $ onto $  L^4_T \dot{H}^{9/4} $. What remains to prove in order to apply Lemma \ref{lem:fixed_point} to the functional equality \eqref{eq:functional_equality} is that for any $ f_0 \in \dot{H}^{3/2} $ here exists a $ T = T\pare{ f_{0} }\in\left(0, \infty\right] $ such that $ \norm{e^{-\bullet \Lambda^3}f_0}_{L^4_T \dot{H}^{9/4}}  $ can be made arbitrarily small. We treat at first the case of small initial datum: we use \eqref{eq:Minkowsky_integral} in order to compute
 \begin{equation}\label{eq:est_hat_propagator}
 \begin{aligned}
 \norm{e^{-\bullet \Lambda^3}f_0}_{L^4_T \dot{H}^{9/4}} = & \ \pare{\int_0^T \pare{\sum_n e^{-2t\av{n}^3}\av{n}^{9/2}\av{\hat{f}_0\pare{n}}^2 }^2\dd t}^{1/4}, \\
 \leq & \pare{\sum_n \pare{\int_0^Te^{-4t\av{n}^3}\av{n}^9\av{\hat{f}_0\pare{n}}^4 \dd t}^{1/2}}^{1/2}, \\
 \leq & \frac{1}{\sqrt{2}} \norm{f_0}_{3/2}.
 \end{aligned}
\end{equation}  
The bound derived above is independent of $ T > 0 $, hence
\begin{equation*}
\norm{e^{-\bullet \Lambda^3}f_0}_{L^4 \dot{H}^{9/4}} \leq \frac{1}{\sqrt{2}} \norm{f_0}_{3/2}.
\end{equation*}
The above inequality proves that the application $ f_0 \mapsto  e^{-\bullet \Lambda^3}f_0 $ is continuous in zero as an application from $ \dot{H}^{3/2} $ to $ L^4 \dot{H}^{9/4} $, hence there exists a $ \varepsilon_0 > 0 $ s.t. if $ \norm{f_0}_{3/2}\leq \varepsilon_0 $ the conditions of Lemma \ref{lem:fixed_point} are satisfied and there exists a unique solution of \eqref{eq:Darcy_capillarity} in the space $ L^4 \pare{\bR_+;  \dot{H}^{9/4} } $. \\

We consider now the case of large initial data in $ \dot{H}^{3/2} $. Let us set $ \rho > 0 $ and define $ f_0 =\bar{f}_0 + \underline{f_0} $ where $ \hat{\bar{f}}_0\pare{n} = \mathbf{1}_{\set{\av{n}>\rho}}\pare{n}\hat{f}_0 \pare{n} $. Since $ \bar{f}_0 $ is localized on high-frequencies,  we exploit \eqref{eq:est_hat_propagator} in order to obtain that 
\begin{equation*}
\norm{e^{-\bullet \Lambda^3}\bar{f}_0}_{L^4_T \dot{H}^{9/4}} \leq C\norm{ \bar{f}_0 }_{3/2} = o\pare{1}\quad \text{as} \quad \rho\to\infty,
\end{equation*}
and thus can be made arbitrarily small letting $ \rho = \rho\pare{ f_0 } $ be large enough. Next we use the localization in the Fourier space in order to argue that
\begin{align*}
\norm{e^{-\bullet \Lambda^3}\underline{f_0}}_{L^4_T \dot{H}^{9/4}} \leq \rho^{3/4} \norm{e^{-\bullet \Lambda^3}\underline{f_0}}_{L^4_T \dot{H}^{3/2}} , 
\end{align*}
while using again \eqref{eq:Minkowsky_integral} we obtain that
\begin{align*}
\norm{e^{-\bullet \Lambda^3}\underline{f_0}}_{L^4_T \dot{H}^{3/2}} \leq & \ \pare{\sum_n \av{n}^3 \sqrt{\frac{1-e^{-4T\av{n}^3}}{4\av{n}^3}} \av{\underline{\hat{f}_0}\pare{n}}^2}^{1/2} \leq C \sqrt[4]{T} \norm{f_0}_{3/2}, 
\end{align*}
which in turn implies that
\begin{equation*}
\norm{e^{-\bullet \Lambda^3}\underline{f_0} }_{L^4_T \dot{H}^{9/4}} \leq C \sqrt[4]{\rho^3 T}  \norm{f_0}_{3/2},
\end{equation*}
thus proving that if $ T \ll 1/\rho^3 $ we can again apply Lemma \ref{lem:fixed_point} proving the existence part of the statement of Theorem \ref{thm:main} for arbitrary data in $ \dot{H}^{3/2} $. The fact that
\begin{equation*}
f\in L^\infty\pare{\bra{0, T}; \dot{H}^{3/2}}\cap L^2\pare{\bra{0, T}; \dot{H}^3}, 
\end{equation*}
follows by a $ \dot{H}^{3/2} $ energy estimate on the equation \eqref{eq:Darcy_capillarity}, while using the Duhamel formulation \eqref{eq:Duhamel_formulation} we obtain that, fixed $ n\in\bZ $, the application $ t\mapsto \hat{f}\pare{n, t} $ is continuous over $ \bra{0, T} $, the Lebesgue dominated convergence allows us to conclude that $ f\in\cC\pare{\bra{0, T}; \dot{H}^{3/2}} $, concluding the proof of Theorem \ref{thm:main}. 
\hfill $ \Box $

\section*{Acknowledgments}
The research of S.S. is supported by the European Research Council through the Starting Grant project H2020-EU.1.1.-639227 FLUID-INTERFACE.

\begin{footnotesize}

\begin{thebibliography}{10}

\bibitem{Alazard2020}
Thomas Alazard.
\newblock Convexity and the {H}ele-{S}haw equation.
\newblock {\em Water Waves}, 2020.

\bibitem{AB20}
Thomas Alazard and Didier Bresch.
\newblock Functional inequalities and strong {L}yapunov functionals for free
  surface flows in fluid dynamics.
\newblock \url{https://arxiv.org/abs/2004.03440}.

\bibitem{AL20}
Thomas Alazard and Omar Lazar.
\newblock Paralinearization of the {M}uskat equation and application to the
  {C}auchy problem.
\newblock {\em Arch. Ration. Mech. Anal.}, 237(2):545--583, 2020.

\bibitem{AN20_3}
Thomas Alazard and Quoc-Hung Nguyen.
\newblock Endpoint {S}obolev theory for the {M}uskat equation.
\newblock \url{https://arxiv.org/abs/2010.06915}.

\bibitem{AN20_2}
Thomas Alazard and Quoc-Hung Nguyen.
\newblock On the {C}auchy problem for the {M}uskat equation. {II}: {C}ritical
  initial data.
\newblock \url{https://arxiv.org/abs/2009.08442}.

\bibitem{AN20_1}
Thomas Alazard and Quoc-Hung Nguyen.
\newblock On the {C}auchy problem for the {M}uskat equation with
  non-{L}ipschitz initial data.
\newblock \url{https://arxiv.org/abs/2009.04343}.

\bibitem{ABN19}
David~M. Ambrose, Jerry~L. Bona, and Timur Milgrom.
\newblock Global solutions and ill-posedness for the {K}aup system and related
  {B}oussinesq systems.
\newblock {\em Indiana Univ. Math. J.}, 68(4):1173--1198, 2019.

\bibitem{ABN12}
David~M. Ambrose, Jerry~L. Bona, and David~P. Nicholls.
\newblock Well-posedness of a model for water waves with viscosity.
\newblock {\em Discrete Contin. Dyn. Syst. Ser. B}, 17(4):1113--1137, 2012.

\bibitem{ABN14}
David~M. Ambrose, Jerry~L. Bona, and David~P. Nicholls.
\newblock On ill-posedness of truncated series models for water waves.
\newblock {\em Proc. R. Soc. Lond. Ser. A Math. Phys. Eng. Sci.},
  470(2166):20130849, 16, 2014.

\bibitem{BG20}
Hantaek Bae and Rafael Granero-Belinch\'on.
\newblock Singularity formation for the {S}erre-{G}reen-{N}aghdi equations and
  applications to abcd-{B}oussinesq systems.
\newblock \url{https://arxiv.org/abs/2001.11937}.

\bibitem{BCDbook}
Hajer Bahouri, Jean-Yves Chemin, and Rapha\"{e}l Danchin.
\newblock {\em Fourier analysis and nonlinear partial differential equations},
  volume 343 of {\em Grundlehren der Mathematischen Wissenschaften [Fundamental
  Principles of Mathematical Sciences]}.
\newblock Springer, Heidelberg, 2011.

\bibitem{BG19}
Gabriele Bruell and Rafael Granero-Belinch\'{o}n.
\newblock On the thin film {M}uskat and the thin film {S}tokes equations.
\newblock {\em J. Math. Fluid Mech.}, 21(2):Paper No. 33, 31, 2019.

\bibitem{CCGO09}
\'{A}ngel Castro, Diego C\'{o}rdoba, Francisco Gancedo, and Rafael Orive.
\newblock Incompressible flow in porous media with fractional diffusion.
\newblock {\em Nonlinearity}, 22(8):1791--1815, 2009.

\bibitem{CGSW19}
Arthur Cheng, Rafael Granero-Belinch\'on, Steve Shkoller, and John Wilkening.
\newblock Rigorous {A}symptotic {M}odels of {W}ater {W}aves.
\newblock {\em Water Waves}, 2019.

\bibitem{CGS16}
C.~H.~Arthur Cheng, Rafael Granero-Belinch\'{o}n, and Steve Shkoller.
\newblock Well-posedness of the {M}uskat problem with {$H^2$} initial data.
\newblock {\em Adv. Math.}, 286:32--104, 2016.

\bibitem{CCGS13}
Peter Constantin, Diego C\'{o}rdoba, Francisco Gancedo, and Robert~M. Strain.
\newblock On the global existence for the {M}uskat problem.
\newblock {\em J. Eur. Math. Soc. (JEMS)}, 15(1):201--227, 2013.

\bibitem{CGSV17}
Peter Constantin, Francisco Gancedo, Roman Shvydkoy, and Vlad Vicol.
\newblock Global regularity for 2{D} {M}uskat equations with finite slope.
\newblock {\em Ann. Inst. H. Poincar\'{e} Anal. Non Lin\'{e}aire},
  34(4):1041--1074, 2017.

\bibitem{CCG09}
Antonio Cordoba, Diego Cordoba, and Francisco Gancedo.
\newblock The {R}ayleigh-{T}aylor condition for the evolution of irrotational
  fluid interfaces.
\newblock {\em Proc. Natl. Acad. Sci. USA}, 106(27):10955--10959, 2009.

\bibitem{CCG11}
Antonio C\'{o}rdoba, Diego C\'{o}rdoba, and Francisco Gancedo.
\newblock Interface evolution: the {H}ele-{S}haw and {M}uskat problems.
\newblock {\em Ann. of Math. (2)}, 173(1):477--542, 2011.

\bibitem{CG07}
Diego C\'{o}rdoba and Francisco Gancedo.
\newblock Contour dynamics of incompressible 3-{D} fluids in a porous medium
  with different densities.
\newblock {\em Comm. Math. Phys.}, 273(2):445--471, 2007.

\bibitem{CL18}
Diego Cordoba and Omar Lazar.
\newblock {G}lobal well-posedness for the 2{D} stable {M}uskat problem in $
  \dot{H}^{3/2} $.
\newblock to appear in {\it Annales scienfiques de l'ENS}.

\bibitem{EMM2012}
Joachim Escher, Anca-Voichita Matioc, and Bogdan-Vasile Matioc.
\newblock Modelling and analysis of the {M}uskat problem for thin fluid layers.
\newblock {\em J. Math. Fluid Mech.}, 14(2):267--277, 2012.

\bibitem{Gancedo2017}
Francisco Gancedo.
\newblock A survey for the {M}uskat problem and a new estimate.
\newblock {\em SeMA J.}, 74(1):21--35, 2017.

\bibitem{GGS20}
Francisco Gancedo, Rafael Granero-Belinchón, and Stefano Scrobogna.
\newblock Surface tension stabilization of the {R}ayleigh-{T}aylor instability
  for a fluid layer in a porous medium.
\newblock {\em Annales de l'Institut Henri Poincaré C, Analyse non linéaire},
  2020.

\bibitem{GancedoLazar20}
Francisco Gancedo and Omar Lazar.
\newblock Global well-posedness for the 3d muskat problem in the critical
  sobolev space.
\newblock \url{https://arxiv.org/abs/2006.01787}.

\bibitem{GL20}
Rafael Granero-Belinch\'{o}n and Omar Lazar.
\newblock Growth in the {M}uskat problem.
\newblock {\em Math. Model. Nat. Phenom.}, 15:Paper No. 7, 23, 2020.

\bibitem{GS2020}
Rafael Granero-Belinch\'{o}n and Stefano Scrobogna.
\newblock Global well-posedness and decay of a viscous water wave model.
\newblock \url{https://arxiv.org/abs/2012.11966}.

\bibitem{GS19}
Rafael Granero-Belinch\'{o}n and Stefano Scrobogna.
\newblock Asymptotic models for free boundary flow in porous media.
\newblock {\em Phys. D}, 392:1--16, 2019.

\bibitem{GS19_3}
Rafael Granero-Belinch\'{o}n and Stefano Scrobogna.
\newblock Models for damped water waves.
\newblock {\em SIAM J. Appl. Math.}, 79(6):2530--2550, 2019.

\bibitem{GS19_4}
Rafael Granero-Belinch{\'o}n and Stefano Scrobogna.
\newblock Well-posedness of water wave model with viscous effects.
\newblock {\em to appear in Proc. Am. Math. Soc.}, 2019.

\bibitem{GS19_2}
Rafael Granero-Belinch\'{o}n and Stefano Scrobogna.
\newblock On an asymptotic model for free boundary {D}arcy flow in porous
  media.
\newblock {\em SIAM J. Math. Anal., to appear}, 2020.

\bibitem{Iorio2001}
Rafael~José Iorio, Jr and Valéria de~Magalhães Iorio.
\newblock {\em Fourier Analysis and Partial Differential Equations}, chapter
  Periodic Distributions and Sobolev Spaces.
\newblock Cambridge University Press, 2001.

\bibitem{Lannes2013}
David Lannes.
\newblock {\em The water waves problem: mathematical analysis and asymptotics},
  volume 188.
\newblock American Mathematical Soc., 2013.

\bibitem{LM2017}
Philippe Lauren\c{c}ot and Bogdan-Vasile Matioc.
\newblock Self-similarity in a thin film {M}uskat problem.
\newblock {\em SIAM J. Math. Anal.}, 49(4):2790--2842, 2017.

\bibitem{MM2020}
Anca-Voichita Matioc and Bogdan-Vasile Matioc.
\newblock The {M}uskat problem with surface tension and equal viscosities in
  subcritical {$ L_p $}-{S}obolev spaces.
\newblock Preprint \url{https://arxiv.org/abs/2010.12261}.

\bibitem{Matioc2018}
Bogdan-Vasile Matioc.
\newblock Viscous displacement in porous media: the {M}uskat problem in 2{D}.
\newblock {\em Trans. Amer. Math. Soc.}, 370(10):7511--7556, 2018.

\bibitem{Matioc2019}
Bogdan-Vasile Matioc.
\newblock The {M}uskat problem in two dimensions: equivalence of formulations,
  well-posedness, and regularity results.
\newblock {\em Anal. PDE}, 12(2):281--332, 2019.

\bibitem{PM2012}
Bogdan-Vasile Matioc and Georg Prokert.
\newblock Hele-{S}haw flow in thin threads: a rigorous limit result.
\newblock {\em Interfaces Free Bound.}, 14(2):205--230, 2012.

\bibitem{Muskat31}
M.~Muskat and H.~G. Botset.
\newblock Flow of gas through porous materials.
\newblock {\em Physics}, 1(1):27--47, 1931.



\bibitem{Stein1970}
Elias~M. Stein.
\newblock {\em Singular integrals and differentiability properties of
  functions}.
\newblock Princeton Mathematical Series, No. 30. Princeton University Press,
  Princeton, N.J., 1970.

\bibitem{Whitaker1986}
Stephen Whitaker.
\newblock Flow in porous media {I}: {A} theoretical derivation of {D}arcy's
  law.
\newblock {\em Transport in Porous Media}, 1(1):3--25, Mar 1986.

\bibitem{Muskat32}
R.~D. Wyckoff, H.~G. Botset, and M.~Muskat.
\newblock Flow of liquids through {P}orous {M}edia under the {A}ction of
  {G}ravity.
\newblock {\em Physics}, 3(2):90--113, 1932.

\end{thebibliography}

\end{footnotesize}

\end{document}